\numberwithin{equation}{section}
\newtheorem{theorem}{Theorem}[section]
\newtheorem{lemma}[theorem]{Lemma}
\newtheorem{proposition}[theorem]{Proposition}
\newtheorem{definition}[theorem]{Definition}
\newtheorem{corollary}[theorem]{Corollary}
\theoremstyle{remark}
\newtheorem{remark}[theorem]{Remark}
\newcommand{\vanish}[1]{}
\begin{document}

\title{On reduced unicellular hypermonopoles} 

\author[Robert Cori and G\'abor Hetyei]{Robert Cori \and G\'abor Hetyei}

\address{Labri, Universit\'e Bordeaux 1, 33405 Talence Cedex, France.
\hfill\break
WWW: \tt http://www.labri.fr/perso/cori/.}

\address{Department of Mathematics and Statistics,
  UNC Charlotte, Charlotte NC 28223-0001.
WWW: \tt http://webpages.charlotte.edu/ghetyei/.}

\date{\today}
\subjclass{Primary 05C30; Secondary 05C10, 05C15}

\keywords {set partitions, noncrossing partitions, genus of a hypermap}

\begin{abstract}
The problem of counting unicellular hypermonopoles by the number of their
hyperedges is equivalent to
describing the cycle length distribution of a product of two circular
permutations, first solved by Zagier. The solution of this problem has
also been used in the study of the cycle graph model of Bafna and
Pevzner and of related models in mathematical biology. In this paper we
develop a method to compute the finite number of reduced unicellular
hypermonopoles of a given genus. The problem of 
representing any hypermap as a drawing is known to be simplifiable to
solving the same problem for reduced unicellular hypermonopoles. We also
outline a correspondence between our hypermap model, the cycle graph model of
Bafna and Pevzner, and the polygon gluing model of Alexeev and
Zograf. Reduced unicellular hypermonopoles correspond to reduced
objects in the other models as well, and the notion of genus is the
same.  
\end{abstract}

\maketitle

\section*{Introduction}

In the study of the combinatorics of the  symmetric group many authors
have been interested in the statistics of cycle lengths of products
of pairs of permutations. In this note we revisit the particular case of
counting the cycles of the product of two circular permutations, or
dually finding the number of decompositions of a given permutation as a
product of two circular permutations.   
  
Our renewed interest in this topic comes from the study of {\em hypermaps}.
In a recent paper~\cite{Cori-Hetyei-hypertrees} we showed that the
problem of drawing a hypermap may be reduced to considering the same
problem for a {\em unicellular hypermonopole} of the same genus. A hypermap
$H=(\sigma,\alpha)$ is a pair of permutations generating a transitive
permutation group. A hypermonopole  is a hypermap with a single
vertex, that is, $\sigma$ is a circular permutation, it is called {\em
  unicellular} if it has only one face, meaning  that $\alpha^{-1}
\sigma$  is  also a  circular permutation.   
The main result of this short paper concerns the enumeration of
unicellular hypermonopoles without {\em buds}, meaning  that $\alpha$ has no
fixed point,  we call such a unicellular hypermonopole {\em reduced} . 

Our  main tool is the enumeration formula obtained by
Zagier~\cite{Zagier} for the number of unicellular  hypermonopoles
having a given number of cycles. Notice that the number of cycles $k$
and the genus $g$ of a unicellular hypermonopole  of $S_n$ satisfy $k=
n-2g$. A combinatorial bijective proof of Zagier's formula was given
by Cori, Marcus and Schaeffer~\cite{Cori-Marcus-Schaeffer}.

In  the field of genome rearranging,  Bafna and
Pevzner~\cite{Bafna-Pevzner} introduced a cycle graph model that is
cryptomorphic to studying the product of a pair of circular
permutations. Their aim  was  to determine the minimum number of
``transpositions''\footnote{The transpositions in biology are
different from those considered in algebra, they may be obtained by a
composition of two usual transpositions} needed to reduce a permutation
(considered as a word) to the identity permutation. This allows to
model the evolution of the DNA of some  viruses.
These questions have motivated several researchers to focus on the
products of circular permutations. This is the case of the work of
A.\ Hultman in his thesis~\cite{Hultman}, and more recently  that of
Alexeev and Zograf~\cite{Alexeev-Zograf} who introduced gluings  of
polygons to represent these products.  

Our paper is organized as follows. In the Preliminaries
we recall the notions of the theory of hypermaps and state Zagier's
formula and some of its reformulations. Section~\ref{sec:basic} is devoted to
the description of the relationship between Zagier's factorization problem, 
counting unicellular hypermonopoles, and the models of Bafna and
Pevzner~\cite{Bafna-Pevzner} and of Alexeev and
Zograf~\cite{Alexeev-Zograf}. In Section~\ref{sec:counting} we give
our main result expressing the number of reduced unicellular
hypermonopoles on $n$ points with a given number of hyperedges. We
already observed in~\cite{Cori-Hetyei-hypertrees} that the number of all
reduced unicellular hypermonopoles of a given genus is finite. Our main
result allows to count these finite numbers explicitly, and
we give the first values of these numbers.

It is worth noting that reduced unicellular hypermonopoles correspond to
cycle graphs having breakpoints everywhere in the model of Bafna and
Pevzner~\cite{Bafna-Pevzner} and that the genus of the corresponding
polygon gluing diagram in the work of  Alexeev and
Zograf~\cite{Alexeev-Zograf} is the same as the genus of the
corresponding unicellular hypermonopole. The three models are intimately
related, hence the counting problem we solved has also some significance
in the related models in mathematical biology.

\section{Preliminaries}
\label{sec:prelim}

\subsection{Hypermaps and their two disk diagrams}

A {\em hypermap} $(\sigma,\alpha)$ is a pair of permutations of a set
$\{1,2,\ldots,n\}$ generating a transitive permutation group. It is used
to represent a (connected) hypergraph on an oriented
surface. The {\em points} 
$1,2,\ldots, n$ are the points of incidence between vertices and
hyperedges. The cycles of $\sigma$ list these points around the
vertices in counterclockwise order, whereas the cycles of $\alpha$ list
these points hyperedges in clockwise order. The cycles of $\alpha^{-1}\sigma$
represent then the faces of the hypermap $(\sigma,\alpha)$. The {\em
  genus $g(\sigma,\alpha)$} of the surface on which such a hypermap may
be drawn is given by the following formula due to Jacques~\cite{Jacques}: 
\begin{equation} 
\label{eq:genusdef}
n + 2 -2g(\sigma,\alpha) = z(\sigma) + z(\alpha) + z(\alpha^{-1}
\sigma),
\end{equation}
where $z(\pi)$ denotes the number of cycles of the permutation $\pi$.
This paper is motivated by the following observation 
in~\cite{Cori-Hetyei-hypertrees}: using a sequence of {\em topological
  hyperdeletions $(\sigma,\alpha)\mapsto (\sigma,\alpha\delta)$}  and of
{\em topological hypercontractions $(\sigma,\alpha)\mapsto
  (\gamma\sigma,\gamma\alpha)$}, each hypermap may be reduced to a
  hypermap $(\sigma',\alpha')$ of the same genus, such that
  $z(\sigma')=1$, that is, $(\sigma',\alpha')$ is a {\em hypermonopole},
  and $z(\alpha^{-1}\sigma')=1$, that is, $(\sigma',\alpha')$ {\em
    unicellular}. We refer the interested reader for the detailed
  description of the process to~\cite{Cori-Hetyei-hypertrees}. The final
  conclusion is that if we are able to draw the unicellular
  hypermonopole $(\sigma',\alpha')$ by some means in the plane then we
  may easily extend such a figure to a drawing of $(\sigma,\alpha)$ by
  replacing the vertex $\sigma'$ with a noncrossing partition having
  $z(\sigma)$ parts and the face $\alpha^{-1}(\sigma)$ with a
  noncrossing partition having $z(\alpha^{-1}\sigma)$ parts. Noncrossing
  partitions, introduced by Kreweras~\cite{Kreweras}, are partitions of
  the set $\{1,2,\ldots,n\}$ such that the polygons representing the
  parts do not cross if we place the points in the cyclic order of
  $(1,2,\ldots,n)$ on a circle.  

There are infinitely many unicellular hypermonopoles of a fixed genus:
trivially, for each $n$ the hypermap $(\sigma,\alpha)$ with
$\sigma=(1,2,\ldots,n)$ and $\alpha=(1)(2)\cdots (n)$ is a unicellular
hypermonopole of genus zero. Fortunately, in the case of unicellular
hypermonopoles it is easy to remove or
reinsert {\em buds}: a bud is a fixed point $i$ of $\alpha$. For $n\geq
2$, the removal of $i$ from the cycles representing $\sigma$ and
$\alpha$ results in a hypermap $(\sigma',\alpha')$, which is still a
hypermonopole (as $z(\sigma')=z(\sigma)=1$) and it is also still
unicellular: the action of $\alpha'^{-1}\sigma'$ on a
$j\not\in\{i,\sigma^{-1}(i)\}$ is the same as that of
$\alpha^{-1}\sigma$, whereas 
$\alpha'^{-1}\sigma'$ takes $\sigma^{-1}(i)$ into    
$\alpha'^{-1}(\sigma(i))$. (Note that neither $\sigma^{-1}(i)$ nor
$\sigma(i)$ is equal to $i$, as $\sigma$ is circular and has at least
$2$ points.) Hence the cycle representing $\alpha'^{-1}\sigma'$ is
obtained from the cycle representing $\alpha^{-1}\sigma$ by simply
removing the point $i$ from the cyclic list. Clearly if we are able to
draw $(\sigma',\alpha')$ in the plane, adding a bud to he figure amounts
to adding a single point.
\begin{definition}
We call a unicellular hypermonopole {\em reduced} if it contains no bud.
\end{definition}  
It has been first observed in~\cite{Cori-Hetyei-hypertrees} that there
are only finitely many reduced unicellular hypermonopoles of a fixed
genus.
\begin{lemma}
\label{lem:genusbound}
If $(\sigma,\alpha)$ is genus $g$ a reduced unicellular hypermonopole on $n$
points then $2g+1\leq n\leq 4g $ holds. 
\end{lemma}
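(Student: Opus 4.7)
The plan is to read off both inequalities directly from the genus formula (\ref{eq:genusdef}) combined with the two structural hypotheses (unicellular hypermonopole, and no buds). First I would record that for a unicellular hypermonopole we have $z(\sigma)=1$ and $z(\alpha^{-1}\sigma)=1$, so Jacques' formula collapses to
\[
n + 2 - 2g = 1 + z(\alpha) + 1,
\]
giving the clean identity $z(\alpha) = n-2g$. This single identity is the engine for both bounds.

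For the upper bound $n \le 4g$, I would use the reducedness hypothesis: since $\alpha$ has no fixed point, every cycle of $\alpha$ has length at least $2$. Summing cycle lengths gives $n \ge 2\, z(\alpha) = 2(n-2g)$, which rearranges to $n \le 4g$. For the lower bound $2g+1 \le n$, I would note that $\alpha$ is a permutation of a nonempty set (the hypermap has at least one point, in fact $n \ge 2$ since $\sigma$ is circular with no fixed point available for $\alpha$), so $z(\alpha) \ge 1$, which via the identity $z(\alpha)=n-2g$ yields $n \ge 2g+1$.

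There is essentially no obstacle here beyond correctly invoking the genus formula and being careful about the edge case $n=1$ (where $\alpha$ would have to be the identity and hence not reduced). The only subtlety worth mentioning is that for $g=0$ both inequalities together force a contradiction, consistent with the fact that no reduced unicellular hypermonopole exists in genus zero. I would close the proof with that observation, since it motivates the enumeration problem treated in Section~\ref{sec:counting}: the bounds confine reduced unicellular hypermonopoles of any fixed genus to a finite range of point sets.
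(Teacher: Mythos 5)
Your proof is correct and follows essentially the same route as the paper: substitute $z(\sigma)=z(\alpha^{-1}\sigma)=1$ into Jacques' formula to get $z(\alpha)=n-2g$, then use $z(\alpha)\le n/2$ (no fixed points) for the upper bound and $z(\alpha)\ge 1$ for the lower bound. The additional remarks about the $n=1$ and $g=0$ edge cases are fine but not needed.
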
  
\begin{proof}
Substituting $z(\sigma)=1$ and $z(\alpha^{-1}\sigma)=1$ into (\ref{eq:genusdef})
we obtain 
\begin{equation}
\label{eq:genusdefr}  
n = z(\alpha)+2g(\sigma,\alpha). 
\end{equation}
Since each cycle of $\alpha$ has at least $2$ elements we get
$z(\alpha)\leq n/2$, yielding the upper bound for $n$. The lower bound
is a direct consequence of $z(\alpha)\geq 1$. 
\end{proof}

\subsection{Products of two circular permutations}

A permutation is {\em circular} if it has exactly one cycle. In his
paper we will need the number of pairs of circular permutations of
$\{1,2,\ldots,n\}$ whose product has exactly $k$ cycles. The answer to
this question was first given by Zagier~\cite[application 3 of Theorem
  1]{Zagier}.
\begin{theorem}[Zagier]
The probability that the product of two cyclic permutations of
$\{1,2,\ldots,n\}$ has $k$ cycles is
$$
P(n,k)=\frac{1+(-1)^{n-k}}{(n+1)!} c(n+1,k).
$$
Here $c(n+1,k)=|s(n+1,k)|$ is the number of permutations of $\{1,2,\ldots,n+1\}$
with $k$ cycles, and $s(n+1,k)$ is a Stirling number of the first kind. 
\end{theorem}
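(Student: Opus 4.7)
The plan is to count directly
\begin{equation*}
N_k(n) \df \#\{(\sigma,\tau)\in S_n \times S_n : \sigma, \tau \text{ are } n\text{-cycles},\ z(\sigma\tau) = k\},
\end{equation*}
so that $P(n,k) = N_k(n)/((n-1)!)^2$, and to attack this via the character theory of $S_n$. The parity factor $1+(-1)^{n-k}$ falls out immediately: applied to the hypermap $(\sigma,\tau)$ with $z(\sigma)=z(\tau)=1$, the genus formula (\ref{eq:genusdef}) reads $n+2-2g = 2+k$, so $n-k=2g$ must be even. Assume henceforth $n\equiv k\pmod{2}$.

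Next I would apply the Frobenius class-algebra formula: for a fixed $\pi\in S_n$, the number $N(\pi)$ of factorizations of $\pi$ as a product of two $n$-cycles equals
\begin{equation*}
N(\pi) = \frac{((n-1)!)^2}{n!}\sum_{\mu\vdash n}\frac{\chi^\mu(c)^2\,\chi^\mu(\pi)}{\chi^\mu(1)},
\end{equation*}
where $c$ denotes the class of $n$-cycles. By the Murnaghan-Nakayama rule, $\chi^\mu(c)$ vanishes unless $\mu$ is a hook $(n-j,1^j)$, in which case $\chi^\mu(c)=(-1)^j$ and $\chi^\mu(1)=\binom{n-1}{j}$. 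Since hook characters are exterior powers of the reflection representation, I would invoke the cycle-type identity
\begin{equation*}
(1+t)\sum_{j=0}^{n-1}\chi^{(n-j,1^j)}(\pi)\,t^j = \prod_i (1-(-t)^{c_i(\pi)}),
\end{equation*}
and then sum the right-hand side against $x^{z(\pi)}$ over all $\pi\in S_n$ via the exponential formula, producing the closed-form bivariate generating function $((1+yt)/(1-y))^x$.

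The hard part will be the algebraic bookkeeping that absorbs the hook dimensions $\binom{n-1}{j}^{-1}$, which I would handle via the Beta integral $\binom{n-1}{j}^{-1} = n\int_0^1 t^j(1-t)^{n-1-j}\,dt$, and rearranges the output into a usable form. The expectation is that this reduces to the polynomial identity
\begin{equation*}
\sum_{a=0}^{n} x(x-1)\cdots(x-a+1)\cdot x(x+1)\cdots(x+n-a-1) = \frac{x(x+1)\cdots(x+n) - x(x-1)\cdots(x-n)}{n},
\end{equation*}
which admits a clean telescoping proof: the difference of consecutive summands on the left-hand side factors as $(n-1)$ times a product of the same shape at length $n-1$. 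Invoking $x(x+1)\cdots(x+n) = \sum_k c(n+1,k)\,x^k$ together with its signed counterpart for $x(x-1)\cdots(x-n)$, the subtraction selects precisely the contributions with $k\equiv n\pmod{2}$, supplying the factor $(1+(-1)^{n-k})\,c(n+1,k)$; dividing by $((n-1)!)^2$ yields Zagier's formula.
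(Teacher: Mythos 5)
The paper does not prove this statement: it is quoted from Zagier, with a pointer to the bijective proof of Cori--Marcus--Schaeffer, so the comparison is really with the literature. Your sketch is the classical character-theoretic proof (essentially Zagier's own route, also Stanley's), and every ingredient you name is correct: the Frobenius class-algebra formula, the vanishing of $\chi^\mu(c)$ off hooks with $\chi^{(n-j,1^j)}(c)=(-1)^j$ and dimension $\binom{n-1}{j}$, the exterior-power identity for hook characters, the Beta-integral device for $\binom{n-1}{j}^{-1}$, and the exponential formula. I checked that the bookkeeping you defer does close up: substituting $s=t/(1-t)$ into the hook identity and absorbing the Beta weight $(1-t)^{n-1}$ turns $\frac{1}{1+s}\prod_i(1-(-s)^{c_i})$ into $\prod_i\bigl((1-t)^{c_i}-(-t)^{c_i}\bigr)$, whose exponential generating function is $\bigl((1+tu)/(1-(1-t)u)\bigr)^x$; extracting $[u^n]$ and integrating the resulting $t^a(1-t)^{n-a}$ yields $\frac{n}{n+1}\sum_{a=0}^n (x)_a\,(x)^{(n-a)}$, and with the prefactor $((n-1)!)^2/n!$ your closing identity gives exactly $\frac{((n-1)!)^2}{(n+1)!}\bigl((x)^{(n+1)}-(x)_{n+1}\bigr)$, as required. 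Your telescoping proof of that identity is also right: writing $T_a=(x)_a(x)^{(n-a)}$ one gets $T_a-T_{a+1}=(n-1)\,(x)_a(x)^{(n-1-a)}$, so summing over $a$ gives $(n-1)S_{n-1}=(x)^{(n)}-(x)_n$. One small caution: your parity argument via the genus formula \eqref{eq:genusdef} implicitly uses that $g$ is a nonnegative integer; the paper's remark following the theorem (all $n$-cycles have the same sign, so their product is even) gets $n\equiv k\pmod 2$ more cheaply. Compared with the bijective proof the paper cites, your approach costs character theory but generalizes immediately to products of cycles of arbitrary prescribed lengths (cf.\ Boccara).
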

The first purely combinatorial proof of this result was provided by
Cori, Marcus and Schaeffer~\cite[Corollary
  1]{Cori-Marcus-Schaeffer}. 
Note that $P(n,k)=0$ if $n-k$ is odd. This is obvious: all circular
permutations have the same parity, hence the product of two circular
permutations must be an even permutation. The parity of a
permutation is the parity of the number of its cycles of even length in
its cycle decomposition. This number can not be even if $n-k$ is
odd. After noting that one of the two circular permutations may be fixed
to be $(1,2,\ldots,n)$, Zagier's result may be restated in combinatorial
terms as follows.  
\begin{theorem}
\label{thm:circperms}
The number of circular permutations $\psi$ of $\{1,\ldots,n\}$ such that
the product $(1,\ldots,n)\psi$ has exactly $k$ cycles is
\begin{align}
  \label{eq:Hn}
  H(n,k)&=
  \begin{cases}
    c(n+1,k)/\binom{n+1}{2} & \mbox{if $n-k$ is even,}\\
    0 & \mbox{if $n-k$ is odd}. 
  \end{cases}  
\end{align}  
\end{theorem}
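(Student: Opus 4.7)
The plan is to derive this reformulation directly from Zagier's probability formula stated just above, using the conjugation-invariance of cycle structure to reduce the problem to counting $\psi$ for a fixed choice of the first circular permutation.

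First I would count the total number of circular permutations of $\{1,\ldots,n\}$: it is $(n-1)!$, since a cycle on $n$ symbols can be written starting from $1$ in $(n-1)!$ ways. The total number of ordered pairs of circular permutations is therefore $((n-1)!)^2$, and by Zagier's formula the number of such pairs $(\varphi,\psi)$ with $\varphi\psi$ having exactly $k$ cycles equals $P(n,k)\cdot ((n-1)!)^2$.

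Next I would argue that for every fixed circular permutation $\varphi$, the number of circular $\psi$ with $z(\varphi\psi)=k$ is the same, and therefore equals $P(n,k)\cdot (n-1)!$. The symmetry is standard: if $\varphi'=\tau\varphi\tau^{-1}$, then $\varphi'\psi$ has $k$ cycles if and only if $\varphi(\tau^{-1}\psi\tau)$ does, since conjugation preserves cycle type; the map $\psi\mapsto\tau^{-1}\psi\tau$ is a bijection between circular permutations and preserves the relevant counting set. Taking $\varphi=(1,2,\ldots,n)$ gives
\begin{equation*}
H(n,k)=P(n,k)\cdot (n-1)! = \frac{1+(-1)^{n-k}}{(n+1)!}\, c(n+1,k)\cdot (n-1)!
= \frac{(1+(-1)^{n-k})\,c(n+1,k)}{n(n+1)}.
\end{equation*}

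Finally I would split on the parity of $n-k$. When $n-k$ is odd the factor $1+(-1)^{n-k}$ vanishes, giving $H(n,k)=0$ as claimed (matching the parity remark preceding the theorem). When $n-k$ is even the factor equals $2$, and using $\binom{n+1}{2}=n(n+1)/2$ the expression collapses to $c(n+1,k)/\binom{n+1}{2}$. There is no real obstacle in this proof, the only point requiring a line of justification is the conjugation-symmetry step that makes the count independent of $\varphi$; everything else is a routine substitution.
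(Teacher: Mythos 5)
Your proof is correct and follows essentially the same route as the paper, which simply remarks that one of the two circular permutations may be fixed to be $(1,2,\ldots,n)$ and then divides out; you have supplied the conjugation-symmetry justification for that step and the routine arithmetic, both of which check out.
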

\begin{table}[h]
\begin{tabular}{l||c|c|c|c|c|c|c|c|c|c|}
\diagbox{$n$}{$k$}& $1$&$ 2$ & $ 3$ & $ 4$ & $ 5$ & $ 6$ & $ 7$ & $ 8$ & $ 9$ & $ 10 $\\
\hline
0& 1& & & & & & & & & \\
1& 0& 1& & & & & & & & \\
2& 1& 0& 1& & & & & & & \\
3& 0& 5& 0& 1& & & & & & \\
4& 8 &0 & 15 & 0 & 1& & & & & \\
5& 0 & 84 &  0 & 35 &  0 & 1  & & & & \\
6& 180& 0& 469& 0& 70& 0&  & & &  \\
7& 0& 3044& 0& 1869& 0& 126& 0& 1  & & \\
8& 8064& 0& 26060& 0& 5985& 0& 210& 0& 1 & \\
9& 0& 193248& 0& 152900& 0& 16401& 0& 330& 0& 1\\
\end{tabular}
\caption{The values of $H(n,k)$ for $n\leq 9$.}
\label{tab:Hnk}
\end{table}

Table~\ref{tab:Hnk} shows the values of $H(n,k)$ for $n\leq 9$. It is
worth noting that when  
$n-k$ is even, the sign of $s(n+1,k)$ is negative, hence we may also
replace $c(n+1,k)$ with $-s(n+1,k)$ in~(\ref{eq:Hn}) above. 
The numbers $H(n,k)$ were later rediscovered by
A.\ Hultman in his MS Thesis~\cite{Hultman} who
defined them in terms of counting alternating cycles in the cycle graph
of a permutation. The numbers $H(n+1,k)$ were named {\em Hultman
  numbers} in the work of Doignon and Labarre~\cite{Doignon-Labarre},
and they are listed as sequence A164652 in the
Online Encyclopedia of Integer Sequences~\cite{OEIS}.   
The equivalence of the two definitions is made
apparent in~\cite[Corollary 1]{Bona-Flynn}, which is based on a result
of Doignon and Labarre~\cite{Doignon-Labarre}.  Citing
Stanley~\cite{Stanley-EC2}, M.\ B\'ona and
R.\ Flynn~\cite[p.\ 931]{Bona-Flynn} also published (\ref{eq:Hn}) for these
numbers. A simple proof of~(\ref{eq:Hn}) (relying on Hultman's
definition) was also  found by S.\ Grusea and A.\ Labarre~\cite[Section
  7]{Grusea-Labarre}.  We will use the following lemma of S.\ Grusea and
A.\ Labarre~\cite[Lemma 8.1]{Grusea-Labarre}. 
\begin{lemma}[Grusea-Labarre]
\label{lem:GL}
The numbers $H(n,k)$ satisfy
$$  
\sum_{k=0}^n H(n,k) x^k=\frac{(x)^{(n+1)}-(x)_{n+1}}{(n+1)n}
$$
Here $(x)^{(n+1)}=x\cdot(x+1)\cdots (x+n)$ and
$(x)_{n+1}=x\cdot(x-1)\cdots (x-n)$ are falling, respectively rising
factorials (Pochhammer symbols). 
\end{lemma}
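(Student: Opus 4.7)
The plan is to derive the identity by combining the two standard generating-function identities for (signed and unsigned) Stirling numbers of the first kind with the explicit formula for $H(n,k)$ given in Theorem~\ref{thm:circperms}.

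The first step would be to recall that $(x)^{(n+1)} = \sum_{k=0}^{n+1} c(n+1,k)\, x^k$ and $(x)_{n+1} = \sum_{k=0}^{n+1} s(n+1,k)\, x^k$, together with $s(n+1,k) = (-1)^{n+1-k} c(n+1,k)$. Subtracting these two expansions I obtain
\begin{equation*}
(x)^{(n+1)} - (x)_{n+1} = \sum_{k=0}^{n+1} c(n+1,k)\bigl(1 - (-1)^{n+1-k}\bigr) x^k = \sum_{k=0}^{n+1} c(n+1,k)\bigl(1 + (-1)^{n-k}\bigr) x^k.
\end{equation*}
The factor $1 + (-1)^{n-k}$ equals $2$ when $n-k$ is even and $0$ when $n-k$ is odd, so exactly the terms relevant to formula~(\ref{eq:Hn}) survive, each with an extra factor of $2$.

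The second step is to divide by $(n+1)n$ and recognize $\binom{n+1}{2} = (n+1)n/2$, which converts the surviving coefficients $2\,c(n+1,k)/[(n+1)n]$ into $c(n+1,k)/\binom{n+1}{2} = H(n,k)$. This yields the claimed equality term by term, and the summation index may be extended from $k=0$ to $n$ (rather than $n+1$) since $H(n,n+1)=0$: indeed, $c(n+1,n+1)=1$ corresponds to the case $n-k=-1$, which is odd.

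There is no real obstacle here; the argument is a direct manipulation of well-known generating-function identities once one notices that the sign pattern $(-1)^{n-k}$ appearing in Zagier's formula matches precisely the difference between rising and falling factorials. The only small point worth emphasizing in the write-up is the extension of the summation range and the identification $\binom{n+1}{2}=(n+1)n/2$, both of which are trivial but make the displayed formula read cleanly.
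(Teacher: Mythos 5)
Your proof is correct. The paper itself gives no argument for this lemma --- it is imported verbatim from Grusea--Labarre with a citation --- so there is no internal proof to compare against; but your derivation is exactly the natural one given the ingredients the paper does supply. Starting from Theorem~\ref{thm:circperms}, the expansions $(x)^{(n+1)}=\sum_{k}c(n+1,k)x^k$ and $(x)_{n+1}=\sum_{k}s(n+1,k)x^k$ together with $s(n+1,k)=(-1)^{n+1-k}c(n+1,k)$ give $(x)^{(n+1)}-(x)_{n+1}=\sum_k c(n+1,k)\bigl(1+(-1)^{n-k}\bigr)x^k$, and dividing by $(n+1)n=2\binom{n+1}{2}$ recovers $H(n,k)$ coefficientwise, with the $k=n+1$ term vanishing since $n-(n+1)$ is odd. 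All steps check out, including the range extension. Two minor remarks: your argument makes the lemma a corollary of Zagier's theorem rather than an independent fact, which is fine since the paper also takes Zagier's formula as given (whereas Grusea--Labarre prove it by a different, self-contained route); and you might note in passing that the paper's labels ``falling, respectively rising'' are swapped relative to the displayed products --- your computation correctly uses the explicit formulas, so nothing is affected.
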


\section{Basic facts about unicellular hypermonopoles}
\label{sec:basic}

As a direct consequence of the definitions we may observe the following.

\begin{remark}
\label{rem:2cycles}
For a unicellular hypermonopole $(\sigma,\alpha)$ the permutations $\sigma$
and $\pi=\alpha^{-1}\sigma$, representing the unique vertex,
respectively, the unique face of the hypermap, are circular
permutations. Conversely, given a pair $(\sigma,\pi)$ of circular
permutations of the same set, there is a unique unicellular
hypermonopole $(\sigma,\alpha)$ satisfying $\pi=\alpha^{-1}\sigma$.   
\end{remark}
Indeed, the unique $\alpha$ satisfying $\pi=\alpha^{-1}\sigma$ is
\begin{align}
\label{eq:hf}
\alpha&=\sigma\pi^{-1}.
\end{align}  
Regardless of $\alpha$, the pair $(\sigma,\alpha)$ is always a hypermap
because $\sigma$ is a circular permutation, and any permutation group
containing is transitive. Theorem~\ref{thm:circperms} and
Remark~\ref{rem:2cycles} have the following consequence. 
\begin{corollary}
\label{cor:Hn}
The number of unicellular hypermonopoles $(\sigma,\alpha)$ satisfying
$\sigma=(1,\ldots,n)$ and $z(\alpha)=k$ is the number
$H(n,k)$ given in~(\ref{eq:Hn}). 
\end{corollary}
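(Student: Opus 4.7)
The plan is to derive this as a direct consequence of Remark~\ref{rem:2cycles} combined with Theorem~\ref{thm:circperms}, using inversion to match up the two counting problems.

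First, I would invoke Remark~\ref{rem:2cycles}: fixing $\sigma=(1,\ldots,n)$, the unicellular hypermonopoles $(\sigma,\alpha)$ are in bijection with the choices of circular permutation $\pi=\alpha^{-1}\sigma$, the correspondence being given by~(\ref{eq:hf}), namely $\alpha=\sigma\pi^{-1}$. So counting unicellular hypermonopoles $(\sigma,\alpha)$ with $\sigma=(1,\ldots,n)$ amounts to counting the circular permutations $\pi$ of $\{1,\ldots,n\}$; we must impose the additional condition $z(\alpha)=k$, which by~(\ref{eq:hf}) becomes $z((1,\ldots,n)\pi^{-1})=k$.

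Next, I would set $\psi=\pi^{-1}$. Since a permutation is circular if and only if its inverse is circular, the map $\pi\mapsto\psi$ is a bijection between circular permutations of $\{1,\ldots,n\}$, and the condition $z((1,\ldots,n)\pi^{-1})=k$ becomes $z((1,\ldots,n)\psi)=k$. By Theorem~\ref{thm:circperms}, the number of circular $\psi$ satisfying this condition is exactly $H(n,k)$, which yields the desired count.

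There is no substantive obstacle here: the argument is a one-line chaining of the two stated results, and the only routine verification is that inversion preserves the property of being a single cycle. One could equivalently bypass the substitution $\psi=\pi^{-1}$ by noting that $z((1,\ldots,n)\pi^{-1})=z(\pi(1,\ldots,n)^{-1})$ (since $z(\tau)=z(\tau^{-1})$) and applying a relabeling, but the inversion argument above is the most direct.
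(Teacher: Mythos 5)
Your argument is correct and is exactly the chain the paper intends: the corollary is stated there without proof as an immediate consequence of Remark~\ref{rem:2cycles} and Theorem~\ref{thm:circperms}, and your substitution $\psi=\pi^{-1}$ (using that inversion is a bijection on circular permutations and that $z(\alpha)=z(\sigma\pi^{-1})=z(\sigma\psi)$) supplies the one small bookkeeping step needed to match the two counts.
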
  

In the rest of this section we show that unicellular hypermonopoles are
bijectively equivalent to two other combinatorial models for the
Hultman numbers $H(n+1,k)$. 

The first model we consider is the {\em cycle graph model} introduced by
Bafna and Pevzner~\cite{Bafna-Pevzner}. We present it using the same
simplification that was introduced in~\cite{Doignon-Labarre} where the
vertex $n+1$ is identified with $0$, and we adjust that model even
further. Let us fix the circular
permutation $\sigma=(0,1,\ldots,n)$ and let $\pi$ be any circular permutation of
the set $\{0,1,\ldots,n\}$. The {\em cycle graph} $G(\pi)$ of the permutation
$\pi$ is a digraph on the vertex set $\{0,1,\ldots,n\}$ whose edges are
colored with two colors: 
\begin{enumerate}
\item the black edges go from $i$ to $\pi^{-1}(i)$ (modulo $n+1$) for
  $0\leq i\leq n$;
\item the grey edges go from $i$ to $i+1$ (modulo $n+1$) for
  $0\leq i\leq n$.    
\end{enumerate}  
Each vertex is the head, respectively tail of one edge of each color,
hence the cycle graph may be uniquely decomposed into disjoint
color-alternating cycles. Note that even though edges do not repeat in
such cycles, vertices may occur twice. To remedy this slight confusion,
we introduce two copy of each vertex $i$: a negative copy $i^-$ and a
positive copy $i^+$. Each negative vertex $i^-$ will be
the head of a black edge whose tail is $\pi(i)^+$ and it will be the
tail of a grey edge whose head is $(i+1)^+$. Equivalently,
each positive vertex $i^+$ will be  the head of a grey edge whose tail
is $(i-1)^-$ and it will be the tail of a black edge whose head is
$\pi^{-1}(i)^+$. Instead of using colors we will label the black edges
with $\pi^{-1}$ and the grey edges with $\sigma$. For example, for $n=7$
and $\pi=(0,4,1,6,2,5,7,3)$ we obtain the following two cycles
$$
0^- \xrightarrow{\sigma} 1^+ \xrightarrow{\pi^{-1}} 4^-
\xrightarrow{\sigma} 5^+ \xrightarrow{\pi^{-1}} 2^- \xrightarrow{\sigma}
3^+ \xrightarrow{\pi^{-1}} 7^- \xrightarrow{\sigma} 0^+
\xrightarrow{\pi^{-1}} 3^- \xrightarrow{\sigma} 4^+
\xrightarrow{\pi^{-1}} 0^- \quad\mbox{and} 
$$
$$
6^-\xrightarrow{\sigma} 7^+ \xrightarrow{\pi^{-1}} 5^-
\xrightarrow{\sigma} 6^+ \xrightarrow{\pi^{-1}} 1^-
\xrightarrow{\sigma} 2^+ \xrightarrow{\pi^{-1}} 6^-
$$
Using this notation one may notice immediately that these cycles may be
uniquely reconstructed from the positive vertices only: we may identify
the first cycle with $(1,5,3,0,4)$ and the second cycle with
$(7,6,2)$. Observe next that
$$\alpha=(1,5,3,0,4)(7,6,2)=\sigma\pi^{-1},$$
that is, $\pi=\alpha^{-1}\sigma$. The cycle graph of $\pi$ may be
identified with the unicellular hypermonopole $(\sigma,\alpha)$ whose
only vertex is $\sigma$ and only face is $\pi=\alpha^{-1}\sigma$. The
number of alternating cycles in $G(\pi)$ is $z(\alpha)$, the number of
hyperedges. 
 
Bafna and Pevzner~\cite{Bafna-Pevzner} call a pair $(i,\pi(i)$ a {\em
  breakpoint} if $\pi(i)\neq \sigma(i)$. In our setting $(i,\pi(i))$ is a
breakpoint if and only if $\alpha^{-1}\sigma(i)\neq \sigma(i)$,
equivalently $\sigma(i)$ is not a bud of $(\sigma,\alpha)$. Reduced
unicellular hypermonopoles bijectively correspond to circular
permutations for which every pair $(i,\pi(i))$ is a breakpoint, see
Lemma~\ref{lem:breakpoint} in the next section.  

The other model is the one introduced by Alexeev and
Zograf~\cite{Alexeev-Zograf}. Consider a $2n$ sided polygon whose
boundary consists of $n$ black sides followed by $n$ grey sides, the
black sides are oriented in the the counterclockwise direction and the
grey sides are oriented in the clockwise direction, as shown in
Fig.~\ref{fig:gluing}.    
\begin{figure}[h]
  \input{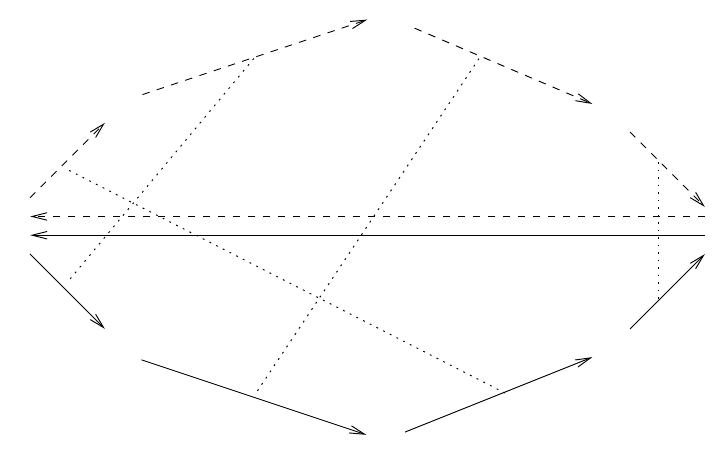_t}
  \caption{A polygon gluing diagram for $\pi=(0,2,3,1,4)$}
  \label{fig:gluing}
\end{figure}  
Pairwise gluing of black sides with gray sides (respecting orientation)
gives an orientable topological surface without boundary of topological
genus $g\geq 0$ (the genus g depends on the gluing). We cut the polygon
along the diagonal connecting the vertex $n$ and the vertex $0$, and we
add a directed edge of each color from $n$ to $0$. By assuming that
these added edges will be glued together we don't change the genus.  
We number all edges by their tail end, and we use the gluing pattern to
define the circular permutation $\pi=(\pi_0,\pi_1,\ldots,\pi_n)$: we
define $\pi_i$ as the grey edge that is glued with the black edge
$i$. As in the previous model, we define $\sigma$ as the circular
permutation $(0,1,\ldots,n)$. For the gluing pattern shown in
Fig.~\ref{fig:gluing} we obtain 
$\pi=(0,2,3,1,4)$.  As before, let us define
$\alpha=\sigma\pi^{-1}$, in our example we obtain
$\alpha=(0)(1,4,2)(3)$. According to Alexeev and
Zograf~\cite{Alexeev-Zograf} it is easy to see that the alternating
cycles of $G(\pi)$ are in bijection with the vertices of the glued
polygon. We can make it easier to see this by adding the signed labels
$i^-$ and $i^+$ along each grey edge labeled $i$ as shown in
Fig.~\ref{fig:gluing}. For example, the alternating cycle
$$
0^- \xrightarrow{\sigma} 1^+ \xrightarrow{\pi^{-1}} 3^-
\xrightarrow{\sigma} 4^+ \xrightarrow{\pi^{-1}} 1^- \xrightarrow{\sigma}
2^+ \xrightarrow{\pi^{-1}} 0^-$$
corresponds to the identification $\textcircled{0}=
\fbox{2}=\textcircled{3}=\fbox{3}=\textcircled{1}=\textcircled{0}$.
The verification of the details is left to the reader.

\section{Counting reduced unicellular hypermonopoles}
\label{sec:counting}

In this section we express the number of reduced unicellular hypermonopoles in
terms of the number $H(n,k)$.
In doing so, the following lemma will be
useful. 
\begin{lemma}
\label{lem:breakpoint}  
A unicellular hypermonopole $(\sigma,\alpha)$ satisfying $\sigma=(1,\ldots,n)$
is reduced if and only if there is no $i\in\{1,\ldots,n\}$ that the
circular permutation  
$\pi=\alpha^{-1}\sigma$ takes into $i+1$. Here addition is performed
modulo $n$. 
\end{lemma}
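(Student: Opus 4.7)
The plan is to unpack the definition of \emph{reduced} and translate the fixed-point condition on $\alpha$ directly into a condition on $\pi=\alpha^{-1}\sigma$ using the relation~(\ref{eq:hf}).

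First I would recall that, by definition, $(\sigma,\alpha)$ is reduced precisely when $\alpha$ has no fixed point, i.e.\ when there is no $j\in\{1,\ldots,n\}$ with $\alpha(j)=j$. Then I would use the identity $\alpha=\sigma\pi^{-1}$ from~(\ref{eq:hf}), which holds in any unicellular hypermonopole by Remark~\ref{rem:2cycles}. The equation $\alpha(j)=j$ becomes $\sigma(\pi^{-1}(j))=j$, which (because $\sigma=(1,\ldots,n)$ and hence $\sigma^{-1}(j)=j-1$ mod $n$) is equivalent to $\pi^{-1}(j)=j-1$, and thus to $\pi(j-1)=j$.

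Setting $i=j-1$ (with arithmetic modulo $n$), the existence of a fixed point of $\alpha$ is therefore equivalent to the existence of an $i\in\{1,\ldots,n\}$ with $\pi(i)=i+1$. This gives both directions of the claimed equivalence at once: $(\sigma,\alpha)$ has a bud iff $\pi$ sends some $i$ to $i+1$, so $(\sigma,\alpha)$ is reduced iff $\pi$ sends no $i$ to $i+1$.

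Since the proof is just a one-line algebraic manipulation together with the description of $\sigma^{-1}$ for the standard cycle, there is no real obstacle; the only point that deserves a word of care is to be explicit that the computation of $\sigma^{-1}(j)=j-1$ is taken modulo $n$ on $\{1,\ldots,n\}$ (so that the statement is symmetric under the cyclic shift), matching the convention in the statement that ``addition is performed modulo $n$.''
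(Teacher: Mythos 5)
Your proof is correct and is essentially the paper's own one-line argument: the paper notes directly that $\alpha^{-1}\sigma(i)=i+1$ is equivalent to $\alpha(i+1)=i+1$, which is the same algebraic translation you perform via $\alpha=\sigma\pi^{-1}$. Nothing further is needed.
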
  
Indeed, $\alpha^{-1}\sigma(i)=i+1$ is equivalent to $\alpha(i+1)=i+1$. 
Let us also note the following consequence of the proof of
Lemma~\ref{lem:genusbound}. 
\begin{corollary}
\label{cor:kbound}  
If $(\sigma,\alpha)$ is a reduced unicellular hypermonopole on $n$
points then $1\leq z(\alpha)\leq n/2$ holds. 
\end{corollary}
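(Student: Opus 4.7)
My plan is to read the bound straight off the proof of Lemma \ref{lem:genusbound}. The key input is that being reduced means $\alpha$ has no fixed points: the definition of a bud is a fixed point of $\alpha$, and a reduced unicellular hypermonopole is one with no buds. Hence in the cycle decomposition of $\alpha$, every cycle has length at least $2$.

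Summing cycle lengths of $\alpha$ gives $n$, so $n \geq 2 z(\alpha)$, which rearranges to $z(\alpha) \leq n/2$, establishing the upper bound. Equivalently, one can read this out of formula~(\ref{eq:genusdefr}), $n = z(\alpha) + 2g(\sigma,\alpha)$, together with the lower bound $g(\sigma,\alpha) \geq z(\alpha)/2$ that follows from $2g = n - z(\alpha) \geq z(\alpha)$. For the lower bound, $\alpha$ is a permutation of the nonempty set $\{1,\ldots,n\}$, so it has at least one cycle, i.e.\ $z(\alpha)\geq 1$. There is no real obstacle here; the corollary is essentially a restatement of the cycle-length observation already made inside the proof of Lemma \ref{lem:genusbound}, separated out for later reference in counting arguments that range over the permissible values of $k = z(\alpha)$.
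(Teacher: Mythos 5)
Your proof is correct and matches the paper's: the corollary is stated there explicitly as a consequence of the proof of Lemma~\ref{lem:genusbound}, where the upper bound comes from every cycle of the fixed-point-free $\alpha$ having length at least $2$ and the lower bound from $z(\alpha)\geq 1$. Nothing further is needed.
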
  

\begin{proposition}
\label{prop:rnk}
  Given $n\geq 2$ and $1\leq k\leq n/2$, the number of
  reduced unicellular hypermonopoles 
  $(\sigma,\alpha)$ satisfying $\sigma=(1,\ldots,n)$ and $z(\alpha)=k$
  is given by
\begin{align}
\label{eq:rnk}
r(n,k)&=\sum_{i=0}^{k-1} (-1)^{i} \binom{n}{i} H(n-i,k-i). 
\end{align}  
\end{proposition}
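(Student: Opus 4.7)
The plan is to enumerate reduced unicellular hypermonopoles by inclusion-exclusion on the set of buds. Let $U(n,k)$ denote the set of unicellular hypermonopoles $(\sigma,\alpha)$ with $\sigma=(1,\ldots,n)$ and $z(\alpha)=k$, so $|U(n,k)|=H(n,k)$ by Corollary~\ref{cor:Hn}. For each $S\subseteq\{1,\ldots,n\}$, let $B_S\subseteq U(n,k)$ consist of those $(\sigma,\alpha)$ for which every $j\in S$ is a bud of $\alpha$. The reduced elements are exactly those of $U(n,k)$ that lie in no $B_{\{j\}}$, so the standard inclusion-exclusion identity yields
$$r(n,k)=\sum_{S\subseteq\{1,\ldots,n\}}(-1)^{|S|}|B_S|.$$

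The key step is to prove that $|B_S|$ depends only on $i:=|S|$ and equals $H(n-i,k-i)$. Given $(\sigma,\alpha)\in B_S$, I would delete each element of $S$ simultaneously from the cyclic notation of $\sigma$ and (as a singleton cycle) from $\alpha$, then relabel the surviving $n-i$ points by $\{1,\ldots,n-i\}$ via the unique bijection preserving the cyclic order inherited from $(1,\ldots,n)$. The bud-removal argument presented in Section~\ref{sec:prelim} shows that deleting a single bud preserves the unicellular hypermonopole property and decreases $z(\alpha)$ by exactly $1$; iterating it $i$ times (the order does not matter since distinct buds do not interfere with each other) produces an element of $U(n-i,k-i)$. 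The inverse inserts $i$ labelled points into the cyclic word of $\sigma$ at the prescribed positions of $S$, together with the corresponding singleton cycles in $\alpha$. This establishes a bijection, hence $|B_S|=H(n-i,k-i)$.

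Grouping subsets by size produces
$$r(n,k)=\sum_{i=0}^{n}(-1)^{i}\binom{n}{i}H(n-i,k-i).$$
Terms with $i\geq k$ contribute zero: for $i>k$ the set $B_S$ is empty, because $\alpha$ has only $k$ cycles and each bud is a distinct singleton cycle; for $i=k$, every cycle of $\alpha$ would be a bud, forcing $\alpha=\mathrm{id}$ and hence $z(\alpha)=n$, which contradicts $k\leq n/2<n$. Equivalently, $H(n-i,k-i)=0$ whenever $k-i\leq 0$ and $n-i\geq 1$. Truncating the sum at $i=k-1$ yields (\ref{eq:rnk}).

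The only delicate point in this plan is the verification of the bijection in the second paragraph, namely checking that the simultaneous removal of a set of buds (rather than one at a time) produces the correct relabeling of $\sigma$ to $(1,\ldots,n-i)$ and reduces $z(\alpha)$ by exactly $|S|$. This is routine given the single-bud analysis already carried out in Section~\ref{sec:prelim}, and the remaining manipulations are entirely formal.
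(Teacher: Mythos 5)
Your proof is correct and follows essentially the same route as the paper: inclusion--exclusion over the buds, a bud-deletion bijection showing that the intersection of a size-$i$ collection of bud conditions has cardinality $H(n-i,k-i)$, and the observation that the terms with $i\geq k$ vanish. The only cosmetic difference is that the paper indexes the bad events by the predecessor $j$ of a bud (via the condition $\alpha^{-1}\sigma(j)=j+1$) and carries out the deletion on the face permutation $\pi=\alpha^{-1}\sigma$ rather than directly on $\sigma$ and $\alpha$.
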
  
\begin{proof}
We compute $r(n,k)$ using inclusion-exclusion. Let  ${\mathcal H}_{n,k}$
be the set of all unicellular hypermonopoles $(\sigma,\alpha)$ satisfying
$\sigma=(1,\ldots,n)$ and 
$z(\alpha)=k$. For each $j\in\{1,\ldots,n\}$, let
${\mathcal H}_{n,k,j}$ be the subset of ${\mathcal H}_{n,k}$ also satisfying
$\alpha^{-1}\sigma(j)=j+1$. Clearly we have
$$
r(n,k)=\left| {\mathcal H}_{n,k}-\bigcup_{j=1}^n {\mathcal H}_{n,k,j}\right|.
$$
Using the inclusion-exclusion formula we obtain
\begin{align}
\label{eq:r1}  
r(n,k)&=\sum_{i=0}^n (-1)^i \sum_{\{j_1,\ldots,j_i\}\subseteq \{1,\ldots,n\}}
\left| {\mathcal H}_{n,k,j_1} \cap \ldots \cap {\mathcal H}_{n,k,j_i}\right|.
\end{align}
First we show that it suffices to perform the summation on the right
hand side only up to $i=k-1$. All hypermaps $(\sigma,\alpha)\in
{\mathcal H}_{n,k,j_1} \cap \ldots  
\cap {\mathcal H}_{n,k,j_i}$ have the property that $j_1,\ldots,j_i$ are fixed
points of $\alpha$. Since $z(\alpha)=k$, we may restrict the summation
in~(\ref{eq:r1}) to $i\leq k$. Furthermore the case $i=k$ is possible
only if the cycles $(j_1),\ldots,(j_k)$ are all the cycles of $\alpha$ in
which case $k=n$ in contradiction with $k\leq n/2$.

From now on let us fix a subset $\{j_1,\ldots,j_i\}$ of
$\{1,\ldots,n\}$ for some $i\leq k-1$. Let $\sigma'$ be the circular
permutation of $\{1,\ldots,n\}-\{j_1,\ldots,j_i\}$ obtained from $(1,\ldots,n)$
by removing the elements $j_1,\ldots,j_i$.  Given any unicellular
hypermonopole $(\sigma,\alpha)\in {\mathcal H}_{n,k,j_1} \cap \ldots \cap
{\mathcal H}_{n,k,j_i}$, let us define the unicellular hypermonopole 
$(\sigma',\alpha')$ on the set of points
$\{1,\ldots,n\}-\{j_1,\ldots,j_i\}$ by the following procedure: 
\begin{enumerate}
\item We define $\pi=\alpha^{-1}\sigma$ as the unique face of
  $(\sigma,\alpha)$.  
\item We define the unique face $\pi'$ of $(\sigma',\alpha')$ as the
  circular permutation obtained by removing the elements
  $j_1,\ldots,j_i$ from from $\pi$ . 
\item The permutation $\alpha'$ is given by $\alpha'=\sigma'\pi'^{-1}$.  
\end{enumerate}
The operation $(\sigma,\alpha)\mapsto (\sigma',\alpha')$ associates to
each element  of ${\mathcal H}_{n,k,j_1} \cap \ldots \cap {\mathcal H}_{n,k,j_i}$ a
unicellular hypermonopole $(\sigma',\alpha')$. The operation is
invertible: to obtain $\pi$ from $\pi'$ we must insert each $j\in
\{j_1,\ldots,j_i\}$ right before $j+1$. Hence we obtain a bijection
between the hypermaps in ${\mathcal H}_{n,k,j_1} \cap \ldots \cap {\mathcal H}_{n,k,j_i}$ and
the set of all unicellular hypermonopoles with vertex
$\sigma'$. Therefore we have
$$
\left| {\mathcal H}_{n,k,j_1} \cap \ldots \cap {\mathcal
  H}_{n,k,j_i}\right|=H(n-i,k-i), 
$$
and the statement is a direct consequence of~(\ref{eq:r1}).
\end{proof}
\begin{corollary}
If $n-k$ is odd then $r(n,k)=0$. As a consequence the least value of $n$
for which $r(n,k)>0$ holds for some $k\leq \frac{n}{2}$ is $n=3$.
\end{corollary}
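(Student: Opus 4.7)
The plan is to split the corollary into its two assertions and treat them in turn.

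For the first assertion, that $r(n, k) = 0$ whenever $n - k$ is odd, I would argue term by term in the sum~(\ref{eq:rnk}). The key observation is that every summand has an $H$-factor $H(n-i, k-i)$, and the parity gap is $(n-i) - (k-i) = n - k$, independent of $i$. Thus if $n - k$ is odd, Theorem~\ref{thm:circperms} forces $H(n-i, k-i) = 0$ for every admissible $i$, so the sum collapses identically to zero.

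For the second assertion, that $n = 3$ is the least value of $n$ for which $r(n, k) > 0$ for some $k \leq n/2$, I would combine the vanishing criterion just established with Corollary~\ref{cor:kbound} and with Proposition~\ref{prop:rnk}, which together restrict attention to pairs with $n \geq 2$, $1 \leq k \leq n/2$, and $n - k$ even. Running through the small cases: for $n = 2$ the only candidate is $k = 1$, but $n - k = 1$ is odd, so $r(2, 1) = 0$; for $n = 3$ the only candidate is again $k = 1$, and now $n - k = 2$ is even, so~(\ref{eq:rnk}) collapses to $r(3, 1) = H(3, 1)$, which by~(\ref{eq:Hn}) equals $c(4,1)/\binom{4}{2} = 6/6 = 1 > 0$.

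The argument meets no real obstacle: the first assertion is a one-line parity observation on~(\ref{eq:rnk}), and the second is a three-case check. The only mild bookkeeping point is to confirm that the summation range in~(\ref{eq:rnk}) never drives the $H$-arguments outside the regime of Theorem~\ref{thm:circperms}, but since $0 \leq i \leq k - 1$ and $k \leq n/2$, one has $n - i \geq n/2$ and $k - i \geq 1$ throughout, so the parity argument applies uniformly and the proof is essentially complete once these ingredients are assembled.
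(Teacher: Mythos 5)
Your proof is correct and matches the paper's argument for the first assertion: the paper likewise observes that $(n-i)-(k-i)=n-k$, so every term $H(n-i,k-i)$ in~(\ref{eq:rnk}) vanishes when $n-k$ is odd. Your explicit check of the cases $n=2$ and $n=3$ (with $r(3,1)=H(3,1)=1$) correctly supplies the second assertion, which the paper leaves to the reader.
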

Indeed, if $n-k$ is odd then all terms $H(n-i,k-i)=0$ appearing on the
right hand side of \eqref{eq:rnk} are zero. 

The values of $r(n,k)$ for $3\leq n\leq 12$ are shown in
Table~\ref{tab:rnk}. They are also listed as sequence A371665 in the
Online Encyclopedia of Integer Sequences~\cite{OEIS}. 
\begin{table}[h]
\begin{tabular}{l||c|c|c|c|c|c|}
\diagbox{$n$}{$k$}& $1$&$ 2$ & $ 3$ & $ 4$ & $ 5$ & $ 6$\\
\hline
3& 1& & & & & \\
4& 0& 1& & & & \\
5& 8& 0& & & & \\
6& 0& 36& 0& & & \\
7& 180& 0& 49& & & \\
8& 0& 1604& 0& 21 && \\
9& 8064& 0& 5144& 0 & & \\
10& 0& 112608& 0& 7680& 0 &\\
11& 604800& 0& 604428& 0& 5445 & \\
12& 0& 11799360& 0& 1669052& 0& 1485\\
\end{tabular}
\caption{The values of $r(n,k)$ for $3\leq n\leq 12$ and $1\leq k\leq
\lfloor n/2\rfloor$.}
\label{tab:rnk}
\end{table}

Combining Lemma~\ref{lem:GL} and Equation~(\ref{eq:rnk}) we obtain the
following formula.

\begin{theorem}
  The numbers $r(n,k)$ satisfy
  $$
  \sum_{k=0}^{\lfloor n/2\rfloor} r(n,k)\cdot x^k =
  \sum_{i=0}^{n-1} \binom{n}{i} (-x)^{i} \cdot
  \frac{(x)^{n-i}-(x)_{n-i}}{(n-i)(n-i+1)}.  
  $$
\end{theorem}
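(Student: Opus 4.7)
My plan is to combine Proposition~\ref{prop:rnk} with Lemma~\ref{lem:GL} by a direct manipulation of generating polynomials: substitute the inclusion--exclusion expression~\eqref{eq:rnk} into the left hand side, interchange the two summations, and apply Lemma~\ref{lem:GL} to the resulting inner sum.

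Explicitly, after inserting~\eqref{eq:rnk}, the left hand side becomes
\[
\sum_{k\ge 1}\sum_{i=0}^{k-1} (-1)^i \binom{n}{i} H(n-i,k-i)\, x^k.
\]
I would then reverse the order of summation: for each fixed $i\ge 0$, the outer index now runs over $k\ge i+1$. Substituting $j=k-i$ and pulling the factor $x^i$ out, the double sum takes the form
\[
\sum_{i\ge 0} (-x)^i \binom{n}{i}\,\sum_{j\ge 1} H(n-i,j)\, x^j.
\]
Since $H(n-i,0)=0$ whenever $n-i\ge 1$, the inner sum agrees with $\sum_{j=0}^{n-i}H(n-i,j)\, x^j$, which by Lemma~\ref{lem:GL} (applied with $n$ replaced by $n-i$) evaluates to $\bigl((x)^{(n-i+1)}-(x)_{n-i+1}\bigr)/\bigl((n-i+1)(n-i)\bigr)$. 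Substituting this back produces the right hand side of the theorem, with the outer index $i$ naturally ranging from $0$ to $n-1$.

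The derivation is almost entirely routine; the one point requiring care is the range of $i$. The outer sum must be capped at $i=n-1$ so that the denominator $(n-i+1)(n-i)$ of Lemma~\ref{lem:GL} stays nonzero, and I should check that no contribution is lost by doing so. This is safe: for any $k\le\lfloor n/2\rfloor$ the inclusion--exclusion in~\eqref{eq:rnk} only runs up to $i=k-1<n$, so the $i=n$ contribution never enters the left hand side in the first place.
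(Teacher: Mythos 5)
Your overall strategy --- substitute \eqref{eq:rnk}, interchange the two sums, and apply Lemma~\ref{lem:GL} --- is the same as the paper's, but the step where you replace the inner sum by $\sum_{j=0}^{n-i}H(n-i,j)\,x^j$ is a genuine gap. After the interchange, for a fixed $i$ the index $j=k-i$ runs only from $1$ to $\lfloor n/2\rfloor-i$, because the outer sum on the left hand side stops at $k=\lfloor n/2\rfloor$. You correctly justify adjoining the $j=0$ term (harmless, since $H(n-i,0)=0$), but you also silently raise the upper limit from $\lfloor n/2\rfloor-i$ to $n-i$, and the terms you add there are not all zero: for example $H(n-i,n-i)=1$ for every $i\le n-1$. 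So the quantity you feed into Lemma~\ref{lem:GL} is not the inner sum you actually have. The range check you do carry out (capping $i$ at $n-1$) addresses a non-issue and misses this one.

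Moreover, the gap cannot be closed, because the identity as printed is false. The coefficient of $x^k$ on the right hand side is $\sum_{i=0}^{\min(k-1,\,n-1)}(-1)^i\binom{n}{i}H(n-i,k-i)$; for $k\le\lfloor n/2\rfloor$ this equals $r(n,k)$ by Proposition~\ref{prop:rnk}, and for $\lfloor n/2\rfloor<k<n$ one can check it vanishes, but for $k=n$ it equals $\sum_{i=0}^{n-1}(-1)^i\binom{n}{i}H(n-i,n-i)=\sum_{i=0}^{n-1}(-1)^i\binom{n}{i}=(-1)^{n+1}\neq 0$. Concretely, for $n=3$ the left hand side is $x$ while the right hand side is $x+x^3$. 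The statement becomes correct if one adds the missing $i=n$ term $(-x)^n$ to the right hand side (equivalently, interprets the $i=n$ summand as $(-x)^n\sum_j H(0,j)x^j=(-x)^n$). The paper's own proof shares this blind spot --- it matches coefficients of $x^k$ only for $k\le\lfloor n/2\rfloor$ and never verifies that the right hand side has no contribution in higher degree --- so your proposal faithfully reproduces the published argument, discrepancy included; but as a proof of the displayed identity it does not go through.
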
  
\begin{proof}
  By Lemma~\ref{lem:GL} the number $H(n-i,k-i)$ is given by
  $$
  H(n-i,k-i)=[x^{k-i}] \frac{(x)^{n-i}-(x)_{n-i}}{(n-i)(n-i+1)}=
[x^k]  x^i\frac{(x)^{n-i}-(x)_{n-i}}{(n-i)(n-i+1)}. 
  $$
The statement now follows from Equation~(\ref{eq:rnk}) after noticing
that we may extend the upper limit of the summation to $n$: 
$$
r(n,k)=\sum_{i=0}^{n} (-1)^{i} \binom{n}{i} H(n-i,k-i)
$$
also holds if we set $H(n,k)=0$ for $k\leq 0$. Lemma~\ref{lem:GL} is
still applicable: the expression $((x)^{(n+1)}-(x)_{n+1})/((n+1)n)$ is a
polynomial of $x$ with zero constant term, containing no negative powers
of $x$.  
\end{proof}
  
Lemma~\ref{lem:genusbound} and Proposition~\ref{prop:rnk} allow us to
compute the number of all reduced unicellular hypermonopoles of a fixed
genus, using the following result. 

\begin{proposition}
The number $u(g)$ of all reduced unicellular hypermonopoles of genus
$g$ is given by
$$
u(g)=\sum_{n=2g+1}^{4g} r(n,n-2g).
$$
\end{proposition}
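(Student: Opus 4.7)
The plan is to aggregate the counts from Proposition~\ref{prop:rnk} over all admissible numbers of points $n$, using Lemma~\ref{lem:genusbound} to determine the range.

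First I would invoke equation~(\ref{eq:genusdefr}) from the proof of Lemma~\ref{lem:genusbound}, which asserts $n = z(\alpha) + 2g$ for any unicellular hypermonopole of genus $g$ on $n$ points. Consequently, once the genus $g$ is fixed and the number of points $n$ is specified, the number of hyperedges is forced to be $k = z(\alpha) = n - 2g$. This means that the two parameters $(n,k)$ governing Proposition~\ref{prop:rnk} collapse to a single free parameter $n$ as soon as $g$ is prescribed.

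Next I would partition the set of all reduced unicellular hypermonopoles of genus $g$ (with the customary normalization $\sigma=(1,\ldots,n)$ used throughout Corollary~\ref{cor:Hn} and Proposition~\ref{prop:rnk}) by their number of points $n$. Lemma~\ref{lem:genusbound} tells us that the only values of $n$ contributing nonempty cells to this partition are $n \in \{2g+1,\,2g+2,\,\ldots,\,4g\}$. For each such $n$, Proposition~\ref{prop:rnk} gives the cardinality of the corresponding cell as $r(n,n-2g)$. Summing these contributions yields the stated formula for $u(g)$.

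A brief consistency check disposes of any concern about applicability of Proposition~\ref{prop:rnk}: the hypothesis $1 \leq k \leq n/2$ becomes $1 \leq n-2g \leq n/2$, which rearranges exactly to $2g+1 \leq n \leq 4g$, matching the summation range. There is no genuine obstacle here; the statement is essentially a tautological bookkeeping consequence of Lemma~\ref{lem:genusbound}, equation~(\ref{eq:genusdefr}), and Proposition~\ref{prop:rnk}, and the only thing worth emphasizing in the written proof is that the parameters $n$ and $k$ are not independent once $g$ is fixed.
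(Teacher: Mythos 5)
Your proposal is correct and follows exactly the paper's argument: fix $g$, use equation~(\ref{eq:genusdefr}) to force $k=n-2g$, bound $n$ by Lemma~\ref{lem:genusbound}, and sum the counts $r(n,n-2g)$ from Proposition~\ref{prop:rnk} over $2g+1\leq n\leq 4g$. The added consistency check that $1\leq k\leq n/2$ matches the summation range is a nice touch but does not change the route.
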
  
\begin{proof}
By~\eqref{eq:genusdefr} a unicellular hypermonopole with $k$ cycles has
genus $g$ if and only if $k=n-2g$ holds. As seen in
Lemma~\ref{lem:genusbound}, $n$ must be at least $2g+1$ and at most
$4g$.  
\end{proof}  

The first $10$ entries of the sequence $\{u(g)\}_{g=1}^{\infty}$ are the
following:

\begin{align*}
  &2, 114, 21538, 8698450, 6113735682, 6641411533106,\\
  &10323616703610338, 21755183272319116818,\\
&59718914489141881419202,
207083242485963591169089778. 
\end{align*}

\section*{Acknowledgments}
The second author wishes to express his heartfelt thanks to Labri, Universit\'e
Bordeaux I, for hosting him as a visiting researcher in Summer 2023 and
in Spring 2024, where a great part of this research was performed. 
This work was partially supported by a
grant from the Simons Foundation (\#514648 to G\'abor Hetyei).

\end{document}